\DeclarePairedDelimiter{\floor}{\lfloor}{\rfloor}
\DeclareMathOperator{\tr}{\text{tr}}
\newcommand{\dom}{\Omega}
\newcommand{\Qnew}{Q^{n+1}}
\newcommand{\Qold}{Q^n}
\newcommand{\Pold}{P(Q^n)}
\newcommand{\rnew}{r^{n+1}}
\newcommand{\rold}{r^n}
\newcommand{\Qsol}{Q_{\Delta t}}
\newcommand{\rsol}{r_{\Delta t}}
\newcommand{\Qsolsub}{Q_{\Delta t_m}}
\newcommand{\rsolsub}{r_{\Delta t_m}}
\newtheorem{lemma}{Lemma}[section]
\newtheorem{corollary}[lemma]{Corollary}
\newtheorem{theorem}[lemma]{Theorem}
\newtheorem*{maintheorem*}{Main Theorem}
\theoremstyle{definition}{}
\theoremstyle{note}{
\newtheorem*{claim*}{Claim}}
\numberwithin{equation}{section}
\title[Analysis for Q-tensor flow]{On convergence of an unconditional stable numerical scheme for Q-tensor flow based on invariant quardratization method}
\date{\today}
\author[Y. Yue]{Yukun Yue}
\address[Yukun Yue]{\newline Department of Mathematical Sciences \newline Carnegie Mellon University \newline 5000 Forbes Avenue, Pittsburgh, PA 15213, USA.}
\email[]{yukuny@andrew.cmu.edu}
\begin{document}
 \pagenumbering{arabic}
% -------------------------------------------------------------
\maketitle
\begin{abstract}
We present convergence analysis towards a numerical scheme designed for Q-tensor flows of nematic liquid crystals. This scheme is based on the Invariant Energy Quadratization method, which introduces an auxiliary variable to replace the original energy functional. In this work, we have shown that given an initial value with $H^2$ regularity, we can obtain a uniform $H^2$ estimate on the numerical solutions for Q-tensor flows and then deduce the convergence to a strong solution of the parabolic-type Q-tensor equation. We have also shown that the limit of the auxiliary variable is equivalent to the original energy functional term in the strong sense.
\end{abstract}

\section{Introduction}
\iffalse
Liquid crystal is an intermediate phase of matter between solid and liquid, which can be depicted as calamitic (rod-like) molecules in a microscopic sense\cite{collings1997introduction}. The molecules possess flowability as they keep moving instead of staying at a still position and simultaneously behave like crystals, oriented with structured alignment or pointing in specific directions.  The liquid crystal's main categories can be divided into the nematic phase, the smectic phase, and the cholesteric phase. The nematic phase is the simplest mesophase in which the center of each molecule is free to flow like fluids in any direction, but the movement is restricted along its long axis\cite{NematicLC}. Various scientific approaches and concrete physical and mathematical theories have been constructed towards the nematic liquid crystals. This paper will focus on the Q-tensor model established by Landau-de Gennes theory\cite{de1993physics, Intro_Qtensor}.
\fi
Liquid crystal is an intermediate phase of matter between solid and liquid, which can be depicted as calamitic (rod-like) molecules in a microscopic sense\cite{collings1997introduction}. This paper will focus on the Q-tensor model for nematic liquid crystals, established by Landau-de Gennes's theory\cite{de1993physics, Intro_Qtensor}. In this framework, the director field of liquid crystal, denoted by $n$, is determined through a symmetric, trace-free $d\times d$ matrix $Q$ known as a Q-tensor order parameter\cite{Majumdar2010LandauDeGennes}. This matrix is assumed to minimize the Landau-de Gennes free energy
\begin{equation}
    \label{eq:LG_energy}
    E_{LG}(Q)=\frac{L}{2}\|\nabla Q\|_{L^2}^2+\int_\Omega \mathcal{F}_B(Q).
\end{equation}
Here $L>0$, $\Omega\in \mathbb{R}^d$ with $d=2$ or $3$ represents the spatial region where the liquid crystal molecules immerse and $\partial\dom\in C^2$. $\mathcal{F}_B$ denotes a bulk potential given by a truncated Taylor series of the thermotropic energy at $Q=0$\cite{PhysRevLett.59.2582}, given by $
   % \label{eq:bulkpotential}
\mathcal{F}_B(Q)=\frac{a}{2}\tr(Q^2)-\frac{b}{3}\tr(Q^3)+\frac{c}{4}\left(\tr(Q^2)\right)^2$,
where $a, b,$ and $c$ are constants and $c>0$. Modeled by calculating the gradient flow\cite{Ball2017, Intro_Qtensor} of Landau-de Gennes free energy, the following equation will describe the non-equilibrium situation satisfied by the Q-tensor, 
\begin{equation}
    \label{eq:Q_formula}
    Q_t=-\frac{\delta E_{LG}}{\delta Q}=M\left[L\Delta Q-\left(aQ-b\left(Q^2-\frac{1}{3}\tr(Q^2)I\right)+c\tr(Q^2)Q \right)\right]\coloneqq M\left(L\Delta Q-S(Q)\right).
\end{equation}
where $M>0$ is a constant. We prescribe $Q$ with Dirichlet or Neumann boundary condition and denote the initial value as $Q_{0}\in H^2$.

Abundance analysis results have been established for this Q-tensor flow model and the related hydrodynamics models. See \cite{AbelsLiu2013, AbelsLiu2014, CavaterraWu, Gonzalez, Iyer2015, PaicuZarnescuGlobalExistence, WangWangZhang2017} and the reference within. Various numerical approaches have also been made\cite{LiuWalkington2000,Mori_1999, Nochetto2017, ShenJie2019, Shin2008,ZHAO2017803,ZhaoYangLiWang2016}. A typical problem in designing a stable and efficient numerical scheme for problem \eqref{eq:Q_formula} is the high non-linearity of the functional derivative of the bulk potential term. The Invariant Energy Quadratization(IEQ) method, which is recently developed, is a powerful tool for dealing with such difficulty and constructing linear energy-stable schemes. It has been widely used in treating gradient flow type problems. See \cite{LEE2022108161, LIU2019206,YANG2016294, ZhaoYang2017,YangZhaoWangShen2017, ZHAO2021107331} for more applications. 

This method introduces an auxiliary variable replacing the original bulk potential. Specifically, we define
\begin{equation}
    \label{eq:r}
    r(Q)=\sqrt{2\left( \frac{a}{2}\tr(Q^2)-\frac{b}{3}\tr(Q^3)+\frac{c}{4}\left(\tr(Q^2)\right)^2 \right)+A_0},
\end{equation}
where $A_0>0$ is a large enough constant to ensure $r(Q)$ to be positive for any symmetric, trace-free tensor $Q$. This is well-defined since the bulk potential term $\mathcal{F}_B $ is bounded from below when $c>0$\cite[Theorem 2.1]{ZHAO2017803}. It follows that
\begin{equation}
    \label{eq:P}
    P(Q)\coloneqq \frac{\delta r(Q)}{\delta Q}=\frac{S(Q)}{r(Q)}=\frac{aQ-b\left(Q^2-\frac{1}{3}\tr(Q^2)I\right)+c\tr(Q^2)Q}{\sqrt{2\left( \frac{a}{2}\tr(Q^2)-\frac{b}{3}\tr(Q^3)+\frac{c}{4}\left(\tr(Q^2)\right)^2 \right)+A_0}},
\end{equation}
and $P(Q)$ is symmetric, trace-free. Then we can reformulate the equation \eqref{eq:Q_formula} as a system for $(Q,r)$ satisfying
\begin{subequations}
    \label{eq:reformulated_system}
   \begin{equation}
   \label{eq:Q_reformulated_formula}
       Q_t=M(L\Delta Q-rP(Q))
   \end{equation}
   \begin{equation}
   \label{eq:rt}
       r_t=P(Q):Q_t
   \end{equation}
\end{subequations}
subject to the same boundary and initial condition as \eqref{eq:Q_formula}.

In \cite{GWY2020}, we have constructed a fully discrete energy-stable scheme based on the IEQ formulation for this system and proved the convergence of the numerical solution to the weak solution of \eqref{eq:reformulated_system}. In this work, we will construct a semi-discrete numerical scheme solving the system \eqref{eq:reformulated_system}, following the idea we raised in \cite{GWY2020}. Let $(Q,r)$ denote the weak limit obtained by the convergence of numerical solutions. We will show a uniform $H^2$ bound for the Q-tensor given a regular enough initial condition and then deduce that $Q$ is a strong solution for system \eqref{eq:Q_formula}. 
%Furthermore, we will show equivalence between $r$ and $r(Q)$ and follow that the limit function $Q$ is a solution the original equation \eqref{eq:Q_formula}. 
It will be done by showing the equivalence of $r$ and $r(Q)$ in the $L^2$ sense. To the best of our knowledge, this is the first work to show such equivalence by explicitly computing the difference between the auxiliary variable and the energy quadratization in a discrete sense. The technique we use here can be applied to study other numerical schemes regarding different problems based on the IEQ method, for example, the hydrodynamical liquid system problem.

%\section{Preliminary}

\section{Numerical Scheme}
Before starting the following analysis, we will introduce some important notations and definitions that appeared in this paper here. We denote the norm of Banach space $X$ as $\|\cdot\|_X$ and use $\|\cdot\|$ to denote the $L^2$ norm. The inner product on $L^2$ will be denoted as $\langle\cdot, \cdot\rangle$. We use $A:B$ to denote the Frobenius norm for two matrix-valued functions. If there is no specific illustration, we will assume a tensor product term to be trace-free and symmetric when we refer it to a Q-tensor. We list the lemmas frequently used in the following analysis: Lipschitz continuity of $P(Q)$\cite[Theorem 4.1]{GWY2020}, Agmon's inequality\cite[Chapter II, section 1.4]{temam1997infinite}, Sobolev's inequality\cite{evans10}, and Aubin-Lions lemma\cite{Simon_Aubin-Lions}. We say $Q$ is a strong solution for \eqref{eq:Q_formula} if $ Q\in L^2\left([0,T];H^3(\Omega)\bigcap H^1_0(\Omega)\right)\cap L^\infty\left([0,T];H^2(\Omega)\right)\bigcap H^1\left([0,T];H^1(\Omega)\right)$ and \eqref{eq:Q_formula} holds in $L^2([0,T]\times\Omega)$.

We propose the following numerical scheme to solve \eqref{eq:reformulated_system}:

\begin{empheq}[left = \empheqlbrace]{align}
               \frac{Q^{n+1}-Q^n}{\Delta t} &= MH^{n+1},\label{eq:DtQ} \\
               r^{n+1}-r^n &= P(Q^n):(Q^{n+1}-Q^n)\label{eq:Dtr},
\end{empheq}
where
$   H^{n+1}=L\Delta Q^{n+1}-r^{n+1}P(Q^n)$
subject to boundary condition $Q^n|_{\partial\Omega}=0$ or $\partial_n Q^n|_{\partial\dom}=0$

Firstly, we will state solvability of this numerical scheme. This can be summarized as the following lemma.

\begin{lemma}
    \label{lem:Solvability_scheme}
    The scheme \eqref{eq:DtQ}-\eqref{eq:Dtr} is well-defined. Specifically, given $Q^0\in H^2(\Omega)$ satisfying Dirichlet or Neumann boundary condition, for each $n>1$, we have $Q^n\in H^2(\Omega)$. Furthermore, for $n\geq 1$, we have $\rold\in H^1(\Omega)$.
\end{lemma}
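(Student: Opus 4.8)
The plan is to solve the scheme inductively, eliminating the auxiliary variable $r^{n+1}$ so that one equation for $Q^{n+1}$ remains, and then to apply an elliptic existence/regularity argument. First I would combine \eqref{eq:DtQ} and \eqref{eq:Dtr}: substituting $Q^{n+1}-Q^n = \Delta t\, M H^{n+1}$ into \eqref{eq:Dtr} gives
\begin{equation*}
r^{n+1} = r^n + \Delta t\, M\, P(Q^n):H^{n+1} = r^n + \Delta t\, M\, P(Q^n):\bigl(L\Delta Q^{n+1} - r^{n+1}P(Q^n)\bigr),
\end{equation*}
which can be solved algebraically (pointwise in $x$) for $r^{n+1}$:
\begin{equation*}
r^{n+1} = \frac{r^n + \Delta t\, M L\, P(Q^n):\Delta Q^{n+1}}{1 + \Delta t\, M\, |P(Q^n)|^2}.
\end{equation*}
Plugging this back into \eqref{eq:DtQ} yields a single linear elliptic equation for $Q^{n+1}$ of the form $Q^{n+1} - \Delta t\, M L\Delta Q^{n+1} + (\text{bounded, nonnegative operator})\,Q^{n+1} = (\text{data built from }Q^n, r^n)$, with the appropriate (Dirichlet or Neumann) boundary condition.

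Next I would set up the weak formulation of this elliptic problem on $H^1_0(\Omega)$ (Dirichlet) or $H^1(\Omega)$ (Neumann) and verify the hypotheses of Lax--Milgram: the bilinear form is $a(Q,\Phi) = \langle Q,\Phi\rangle + \Delta t\, ML\,\langle\nabla Q,\nabla\Phi\rangle + \Delta t\, ML\,\langle \tfrac{P(Q^n):\nabla(\cdot)}{1+\Delta t M|P(Q^n)|^2}\, ,\, \cdots\rangle$-type terms; coercivity follows because the extra term coming from $r^{n+1}P(Q^n)$ is of the form $\langle \beta\, (P(Q^n):\nabla Q^{n+1}) P(Q^n),\nabla\Phi\rangle$ with $\beta = \Delta t ML/(1+\Delta t M|P(Q^n)|^2)\ge 0$, which when tested against $\Phi = Q^{n+1}$ contributes $\int \beta\, |P(Q^n):\nabla Q^{n+1}|^2 \ge 0$, so it only helps coercivity; boundedness uses $L^\infty$ control of $P(Q^n)$, which holds since $Q^n\in H^2(\Omega)\hookrightarrow L^\infty$ for $d\le 3$ and $P$ maps bounded sets to bounded sets (recall $P(Q) = S(Q)/r(Q)$ with $r(Q)$ bounded below by a positive constant). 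This gives a unique weak solution $Q^{n+1}\in H^1$. Elliptic regularity (using $\partial\Omega\in C^2$, and the fact that the coefficient $\beta$ and the right-hand side lie in the needed spaces — here one may need to first note $Q^{n+1}\in H^1$, hence the lower-order terms are in $L^2$, bootstrapping $\Delta Q^{n+1}\in L^2$) then upgrades this to $Q^{n+1}\in H^2(\Omega)$. With $Q^{n+1}\in H^2$ in hand, the formula for $r^{n+1}$ shows $r^{n+1}\in H^1(\Omega)$: the numerator involves $\Delta Q^{n+1}\in L^2$ and $P(Q^n)\in H^1\cap L^\infty$, the denominator is bounded below by $1$ and lies in $W^{1,\cdot}$, so the quotient is in $L^2$ with an $L^2$ gradient after the product/quotient rule (using $\nabla P(Q^n)\in L^2$, $\nabla\Delta Q^{n+1}$ does not appear because $\Delta Q^{n+1}$ enters only through the already-$H^1$ combination — more carefully, one rewrites $r^{n+1}$ using \eqref{eq:Dtr} as $r^n + P(Q^n):(Q^{n+1}-Q^n)$, a product of $H^1$ functions, one of which is also $L^\infty$, hence $H^1$). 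The base case $n=0$ is given by hypothesis.

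The main obstacle I anticipate is the elliptic regularity step: the equation for $Q^{n+1}$ has a zeroth/first-order term with coefficient $\beta(x) = \Delta t ML/(1+\Delta t M|P(Q^n)|^2)$ that is merely $W^{1,p}$ (inherited from $P(Q^n)\in H^1\cap L^\infty$), not smooth, so I cannot quote a textbook $C^\infty$-coefficient result directly. The way around it is to treat that term as a lower-order perturbation and move it to the right-hand side: once $Q^{n+1}\in H^1$ is known from Lax--Milgram, the combination $\beta(x)(P(Q^n):\nabla Q^{n+1})P(Q^n)$ lies in $L^2(\Omega)$, so $Q^{n+1}$ solves $Q^{n+1} - \Delta t ML\,\Delta Q^{n+1} = f$ with $f\in L^2$, and standard $H^2$ elliptic regularity for the (constant-coefficient) operator $I - \Delta t ML\,\Delta$ on a $C^2$ domain applies. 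A secondary subtlety is making sure the pointwise solve for $r^{n+1}$ is legitimate, i.e. that the denominator $1+\Delta t M|P(Q^n)|^2$ is bounded below by a positive constant — this is immediate since it is $\ge 1$ — and that $r^{n+1}$ so defined indeed coincides with the $H^1$ object produced by \eqref{eq:Dtr}; I would just present \eqref{eq:Dtr} directly as the definition of $r^{n+1}$ and use the quotient formula only as the device for closing the elliptic problem for $Q^{n+1}$.
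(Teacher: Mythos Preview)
Your overall strategy---eliminate $r^{n+1}$, set up a linear elliptic problem for $Q^{n+1}$, apply Lax--Milgram plus $H^2$ regularity, then read off $r^{n+1}\in H^1$ from \eqref{eq:Dtr}---is exactly the paper's. The difference is in \emph{how} you eliminate $r^{n+1}$, and your choice creates an unnecessary difficulty that your sketch then mishandles.

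The paper simply substitutes \eqref{eq:Dtr} into $H^{n+1}$: since $r^{n+1}=r^n+P(Q^n){:}(Q^{n+1}-Q^n)$ is already linear in $Q^{n+1}$ (not in $\Delta Q^{n+1}$), the resulting equation is
\[
Q^{n+1}-\Delta t\,ML\,\Delta Q^{n+1}+\Delta t\,M\,\bigl(P(Q^n){:}Q^{n+1}\bigr)P(Q^n)=\text{data in }L^2,
\]
a perturbation of $I-\Delta t\,ML\,\Delta$ by the \emph{zeroth-order} operator $A\mapsto(P(Q^n){:}A)P(Q^n)$. The associated bilinear form $T(A,B)=\langle A,B\rangle+\Delta t\,ML\,\langle\nabla A,\nabla B\rangle+\Delta t\,M\!\int(P(Q^n){:}A)(P(Q^n){:}B)$ is trivially coercive, since the last term is $\Delta t\,M\,\|P(Q^n){:}A\|^2\ge 0$. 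Elliptic regularity is then immediate.

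You instead pass through $H^{n+1}$ and solve for $r^{n+1}$ in terms of $\Delta Q^{n+1}$, which turns the perturbation into a \emph{second-order} term $\beta(x)\bigl(P(Q^n){:}\Delta Q^{n+1}\bigr)P(Q^n)$ with variable coefficient. Your coercivity claim for this form has two gaps. First, a sign: after integrating by parts, the principal contribution to $a(Q,Q)$ is $-\!\int\beta\,|P(Q^n){:}\nabla Q|^2$, which \emph{subtracts} from $\Delta t\,ML\,\|\nabla Q\|^2$ rather than adding to it (the net remains positive only because $\beta\,|P(Q^n)|^2<\Delta t\,ML$ pointwise, not because the extra term ``helps''). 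Second, integrating by parts against the variable factor $\beta(x)P(Q^n)(x)$ produces cross terms involving $\nabla\beta$ and $\nabla P(Q^n)$, which you omit; bounding these so that coercivity survives (without a smallness condition on $\Delta t$ depending on $\|Q^n\|_{H^2}$) is extra work that the paper's zeroth-order formulation avoids entirely. Your treatment of the $H^2$ regularity bootstrap and of $r^{n+1}\in H^1$ via \eqref{eq:Dtr} is fine; you even use the direct substitution from \eqref{eq:Dtr} there---just use it earlier, for the elliptic problem itself.
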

\begin{proof}
Assume the lemma holds for $Q^n$, we will prove the lemma holds for $\Qnew$ as well. Substituting $r^{n+1}$ by \eqref{eq:Dtr} into $H^{n+1}$ and $\eqref{eq:DtQ}$, we obtain
\begin{equation}
    \label{eq:Q_update_formula}
    Q^{n+1}-ML\Delta Q^{n+1}\Delta t+(\Pold:\Qnew)\Pold\Delta t=Q^n-r^nP(Q^n)\Delta t+(\Pold:\Qold)\Pold\Delta t
\end{equation}
We define bilinear operator $T:H^1(\Omega)\times H^1(\Omega)\to \mathbb{R}$ as 
\begin{equation}
    \label{eq:update_operator}
    T(A, B)=\langle A, B\rangle+ML\langle \nabla A, \nabla B\rangle\, \Delta t + \langle \Pold, A\rangle\, \langle \Pold, B\rangle\,\Delta t.
\end{equation}
This operator is clearly bounded. For its ellipticity, taking $B=A$ and we yield
\begin{equation}
    \label{eq:ellipticity}
    \lvert T(A,A)\rvert =\|A\|^2 + ML\|\nabla A\|^2+\lvert \langle \Pold, A\rangle \rvert^2>C\|A\|_{H^1}^2,
\end{equation}
for some constant $C>0$ related to $M, L, \Delta t$. Here we have shown that $T$ is an elliptic operator. By standard regularity results for elliptic problems\cite{evans10, Grisvard}, there exists unique $\Qnew\in H^2(\Omega)$ solving \eqref{eq:DtQ}-\eqref{eq:Dtr}. The regularity of $r^{n}$ will immediately follow from the regularity of $Q^n$ and \eqref{eq:Dtr} by standard induction argument.
\end{proof}
\iffalse
Regularity of $r^{n+1}$ will immediately follow from this lemma and \eqref{eq:Dtr} using induction.
\begin{corollary}
    \label{cor:r_regularity}
    $\rold\in H^1(\Omega)$ for every $n\geq1$.
\end{corollary}
\fi
Next, we will refer to the following results on trace-free and symmetric properties preserved by our numerical scheme and the discrete energy stability. Their proof follows in the same way as \cite[Proposition 4.4]{GWY2020} and \cite[Theorem 4.6]{GWY2020}, respectively.
%to state that scheme will keep trace-free and symmetric properties of $Q^{n}$. The proof will follow in the same way as \cite[Proposition 4.4]{GWY2020}.
\begin{lemma}
    \label{lem:sym_tracefree_property}
    If $\Qold$ is symmetric and trace-free, then $\Qnew$ computed by \eqref{eq:DtQ} is also symmetric and trace-free.
\end{lemma}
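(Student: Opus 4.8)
The plan is to reduce both claims to the uniqueness part of Lemma~\ref{lem:Solvability_scheme}, using the fact recorded just after \eqref{eq:P} that $P(Q^n)$ is symmetric and trace-free whenever $Q^n$ is. Since $Q^{n+1}\in H^2(\Omega)$ is characterized as the unique solution of the elliptic identity \eqref{eq:Q_update_formula} with the prescribed (homogeneous Dirichlet or Neumann) boundary data, it suffices to show that $(Q^{n+1})^{\top}$ also solves \eqref{eq:Q_update_formula} with the same data, and that $\tr(Q^{n+1})$ solves a homogeneous elliptic problem with homogeneous boundary data.

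For symmetry, I would transpose every term of \eqref{eq:Q_update_formula}. Because $Q^n=(Q^n)^{\top}$ and $P(Q^n)=P(Q^n)^{\top}$, the right-hand side $Q^n-r^nP(Q^n)\Delta t+(P(Q^n){:}Q^n)P(Q^n)\Delta t$ is symmetric; transposition commutes with $\Delta$; and the scalar coefficient is unchanged, since $A{:}B=A{:}B^{\top}$ for symmetric $A$ gives $P(Q^n){:}Q^{n+1}=P(Q^n){:}(Q^{n+1})^{\top}$. Hence $(Q^{n+1})^{\top}$ satisfies exactly \eqref{eq:Q_update_formula}, with the same right-hand side and the same boundary condition (the transpose of $0$ is $0$, and transposition commutes with the normal derivative). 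By the uniqueness asserted in Lemma~\ref{lem:Solvability_scheme}, $(Q^{n+1})^{\top}=Q^{n+1}$.

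For the trace-free property, I would take the trace of \eqref{eq:Q_update_formula}. Using $\tr P(Q^n)=0$ and $\tr Q^n=0$, every term containing $P(Q^n)$ and the $Q^n$ term vanish, leaving $\tr(Q^{n+1})-ML\,\Delta t\,\Delta\tr(Q^{n+1})=0$ with homogeneous Dirichlet or Neumann data (and $\tr(Q^{n+1})\in H^2(\Omega)$, so this holds in $L^2$). Testing against $\tr(Q^{n+1})$ and integrating by parts — the boundary contribution vanishing in either case — yields $\|\tr(Q^{n+1})\|^2+ML\,\Delta t\,\|\nabla\tr(Q^{n+1})\|^2=0$, so $\tr(Q^{n+1})\equiv 0$.

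The argument is essentially routine once \eqref{eq:Q_update_formula} and Lemma~\ref{lem:Solvability_scheme} are available, mirroring \cite[Proposition 4.4]{GWY2020}; the only step needing a moment's care is verifying that transposition maps \eqref{eq:Q_update_formula} to itself, i.e.\ that the quadratic coupling term $(P(Q^n){:}Q^{n+1})P(Q^n)$ is preserved, which is exactly where the symmetry of $P(Q^n)$ — inherited from that of $Q^n$ — enters.
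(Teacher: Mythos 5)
Your argument is correct and is the standard one: since the paper only defers this lemma to \cite[Proposition 4.4]{GWY2020} without reproducing the proof, your reduction to the uniqueness statement of Lemma~\ref{lem:Solvability_scheme} — checking that $(Q^{n+1})^{\top}$ solves \eqref{eq:Q_update_formula} with the same data (using the symmetry of $P(Q^n)$ to preserve the coupling term $(P(Q^n){:}Q^{n+1})P(Q^n)$), and that $\tr(Q^{n+1})$ solves a homogeneous screened-Poisson problem and hence vanishes — supplies a complete proof along essentially the same lines as the cited one. No gaps.
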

%Similarly, proof for following basic energy law will follow in the same way as \cite[Theorem 4.6]{GWY2020}.
\begin{lemma}
    \label{lem:energy_stability}
    Define the discrete energy
    $ %\label{eq:discrete_energy}
        E^n= \frac{L}{2}\|\nabla Q^n\|^2+\frac{1}{2}\|r^n\|^2$,
    then
    \begin{equation}
        \label{eq:discrete_energy_law}
        E^{n+1}-E^n\leq -M\|H^{n+1}\|^2\Delta t.
    \end{equation}
    In addition, it follows that for any $N>0$,
    $
        %\label{eq:H_L2_bound}
        \sum_{n=0}^N\|H^{n+1}\|^2\,\Delta t\leq 2E_0$.
\end{lemma}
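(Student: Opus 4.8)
\noindent\emph{Proof proposal.} The plan is the classical discrete energy argument, and the whole point of the IEQ update \eqref{eq:Dtr} is to make it go through verbatim. I would test the momentum equation \eqref{eq:DtQ} in $L^2(\dom)$ against $\Hnew$ itself: the right-hand side immediately produces $M\,\Delta t\,\|\Hnew\|^2$, so everything reduces to identifying the pairing $\langle\Qnew-\Qold,\Hnew\rangle$ with $-(E^{n+1}-E^n)$ up to nonnegative remainders. All inner products here are legitimate rather than formal: Lemma \ref{lem:Solvability_scheme} gives $\Qnew\in H^2(\dom)$, hence $\Delta\Qnew\in L^2$, and via the embedding $H^2\hookrightarrow L^\infty$ in $d\le 3$ together with the Lipschitz bound for $P$, the product $\rnew\Pold$ lies in $L^2$.

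Expanding $\Hnew=L\Delta\Qnew-\rnew\Pold$, I would treat the two terms separately. For $\langle\Qnew-\Qold,L\Delta\Qnew\rangle$, integrate by parts — the boundary contribution drops because $\Qnew$ and $\Qold$ satisfy the same homogeneous Dirichlet or Neumann condition — and then apply the polarization identity $\langle a,a-b\rangle=\tfrac12(\|a\|^2-\|b\|^2+\|a-b\|^2)$ with $a=\Grad\Qnew$, $b=\Grad\Qold$; this contributes $-\tfrac{L}{2}\big(\|\Grad\Qnew\|^2-\|\Grad\Qold\|^2\big)-\tfrac{L}{2}\|\Grad(\Qnew-\Qold)\|^2$. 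For the nonlinear term I would pull the scalar $\rnew$ out of the Frobenius contraction and invoke \eqref{eq:Dtr}, so that $\langle\Qnew-\Qold,\rnew\Pold\rangle=\langle\rnew,\Pold:(\Qnew-\Qold)\rangle=\langle\rnew,\rnew-\rold\rangle$, and polarize again to get $\tfrac12\big(\|\rnew\|^2-\|\rold\|^2\big)+\tfrac12\|\rnew-\rold\|^2$. Collecting everything yields the sharper identity
\[
E^{n+1}-E^n=-M\,\Delta t\,\|\Hnew\|^2-\tfrac{L}{2}\|\Grad(\Qnew-\Qold)\|^2-\tfrac12\|\rnew-\rold\|^2,
\]
and discarding the last two nonnegative terms is exactly \eqref{eq:discrete_energy_law}. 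Summing over $n=0,\dots,N$ telescopes the left-hand side to $E^{N+1}-E_0$, and since $E^{N+1}\ge0$ this gives $M\sum_{n=0}^{N}\|\Hnew\|^2\,\Delta t\le E_0$, i.e. the asserted summed bound.

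Honestly there is no real obstacle here; the estimate is a bookkeeping exercise once the correct test function is chosen. The two points worth stating carefully are (i) that all the pairings and the integration by parts are rigorous, which follows from the regularity recorded in Lemma \ref{lem:Solvability_scheme}, and (ii) that the discrete relation \eqref{eq:Dtr} is precisely the discrete analogue of the chain rule $\frac{d}{dt}\int_\dom\mathcal{F}_B(Q)=\langle P(Q),Q_t\rangle$ — it is this structural fact, and not any convexity of $\mathcal{F}_B$, that makes the cross term in $\langle\Qnew-\Qold,\Hnew\rangle$ collapse into a perfect difference of $\tfrac12\|r\|^2$. Consequently the same computation is insensitive to the (non-convex) shape of the bulk potential, which is the reason the IEQ scheme is unconditionally stable.
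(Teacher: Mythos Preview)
Your argument is correct and is exactly the paper's approach (cf.\ the computation referenced from \cite[Theorem 4.6]{GWY2020}): test \eqref{eq:DtQ} against $\Hnew$, integrate by parts in the Laplacian term, and use \eqref{eq:Dtr} together with the polarization identity to turn the cross term into a telescoping difference of $\tfrac12\|r\|^2$, obtaining the sharp identity $E^{n+1}-E^n=-M\Delta t\,\|\Hnew\|^2-\tfrac{L}{2}\|\Grad(\Qnew-\Qold)\|^2-\tfrac12\|\rnew-\rold\|^2$. The only cosmetic point is the constant in the summed bound: your telescoping gives $\sum_{n=0}^N\|\Hnew\|^2\Delta t\le E_0/M$, which is what the computation actually yields; the ``$2E_0$'' in the statement is a harmless (and in general looser) variant, not literally what your last line produces.
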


\iffalse
\begin{proof}
We take inner product of \eqref{eq:DtQ} with $-H^{n+1}\Delta t$ and obtain
\begin{equation}\label{eq:Q_energy}
     -M\| H^{n+1}\|^2\Delta t=\frac{L}{2}\|\nabla Q^{n+1}\|^2-\frac{L}{2}\|\nabla Q^n\|^2+\frac{L}{2}\|\nabla(Q^{n+1}-Q^n)\|^2+r^{n+1}P(Q^n):(Q^{n+1}-Q^n).
\end{equation}
Taking inner product of \eqref{eq:Dtr} with $r^{n+1}$, we have
\begin{equation}\label{eq:r_energy}
    \frac{1}{2}\|r^{n+1}\|^2-\frac{1}{2}\|r^n\|^2+\frac{1}{2}\|r^{n+1}-r^n\|^2=r^{n+1}P(Q^n):(Q^{n+1}-Q^n).
\end{equation}
Adding \eqref{eq:Q_energy} and \eqref{eq:r_energy}, it gives
\begin{equation}
\label{eq:basic_energy_law}
\left(\frac{L}{2}\|\nabla Q^{n+1}\|^2+\frac{1}{2}\|r^{n+1}\|^2 \right)-\left(\frac{L}{2}\|\nabla Q^n\|^2+\frac{1}{2}\|r^n\|^2 \right)+\frac{L}{2}\|\nabla(Q^{n+1}-Q^n)\|^2+\frac{1}{2}\|r^{n+1}-r^n\|^2=-M\|H^{n+1}\|^2\Delta t.
\end{equation}
Summing up \eqref{eq:basic_energy_law} from $n=0$ to $N-1$ and it gives the \eqref{eq:discrete_energy_law}. It implies that the discrete energy \eqref{eq:discrete_energy} is monotonically decreasing. Then summing up \eqref{eq:discrete_energy_law} from $n=0$ to $n=N$, we obtain
\end{proof}
\fi
To the end of this section, we will show the following estimate for $Q^n$:
\begin{lemma}\label{lem:Delta_Q_estimate}
    If $Q^0\in H^2(\Omega)$, then $ %\label{eq:Q_H2_estimate}
        \sum_{k=1}^N \|\Delta Q^k\|^2\,\Delta t\leq M$,
    for some constant $M>0$
    \end{lemma}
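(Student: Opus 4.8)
The plan is to read $\Delta Q^{n+1}$ off the scheme, so the estimate reduces to the discrete energy bounds of Lemma~\ref{lem:energy_stability} together with control of the single nonlinear term $r^{n+1}P(Q^n)$, which I will bound by an Agmon interpolation that costs only a fraction $d/4<1$ of the $H^2$-norm being estimated. To begin, I would record the uniform-in-$\Delta t$ bounds: Lemma~\ref{lem:energy_stability} gives $E^n\le E_0$ for all $n$, hence $\|\nabla Q^n\|\le(2E_0/L)^{1/2}$, $\|r^n\|\le(2E_0)^{1/2}$, and $\sum_{n\ge 0}\|H^{n+1}\|^2\Delta t\le 2E_0$. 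I would then establish $\|Q^n\|_{L^2}\le C$ uniformly: by Poincar\'e's inequality in the Dirichlet case, and in the Neumann case by a short discrete Gr\"onwall estimate obtained by testing \eqref{eq:DtQ} with $Q^{n+1}$ and using the linear growth bound $|P(Q)|\le L_P|Q|$ together with the embedding $H^1\hookrightarrow L^4$ (available for $d\le 3$), for $\Delta t$ small. Consequently $\|Q^n\|_{H^1}\le C$, and since $\partial\Omega\in C^2$, elliptic regularity \cite{evans10,Grisvard} yields $\|Q^n\|_{H^2}\le C(1+\|\Delta Q^n\|)$ for all $n\ge 1$ (recall $Q^n\in H^2$ by Lemma~\ref{lem:Solvability_scheme}).

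Next I would estimate the reaction term. The global Lipschitz continuity of $P$ \cite[Theorem 4.1]{GWY2020} and $P(0)=0$ give $|P(Q)|\le L_P|Q|$ pointwise, so $\|P(Q^n)\|_{L^\infty}\le L_P\|Q^n\|_{L^\infty}$. Applying Agmon's inequality \cite{temam1997infinite} and then the elliptic estimate above, $\|Q^n\|_{L^\infty}\le C\,\|Q^n\|_{L^2}^{1-d/4}\|Q^n\|_{H^2}^{d/4}\le C\bigl(1+\|\Delta Q^n\|^{d/4}\bigr)$. Since $r^{n+1}$ is scalar-valued, $\|r^{n+1}P(Q^n)\|^2\le\|r^{n+1}\|^2\,\|P(Q^n)\|_{L^\infty}^2\le C\bigl(1+\|\Delta Q^n\|^{d/2}\bigr)$, with exponent $d/2<2$.

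To conclude, I would rearrange the scheme's definition of $H^{n+1}$ as $L\Delta Q^{n+1}=H^{n+1}+r^{n+1}P(Q^n)$ (an identity in $L^2(\Omega)$, since all three terms lie there by Lemma~\ref{lem:Solvability_scheme}), whence $\|\Delta Q^{n+1}\|^2\le\tfrac{2}{L^2}\|H^{n+1}\|^2+\tfrac{2}{L^2}\|r^{n+1}P(Q^n)\|^2\le\tfrac{2}{L^2}\|H^{n+1}\|^2+C\bigl(1+\|\Delta Q^n\|^{d/2}\bigr)$. Because $d/2<2$, Young's inequality gives $\|\Delta Q^n\|^{d/2}\le\varepsilon\|\Delta Q^n\|^2+C_\varepsilon$ for every $\varepsilon>0$. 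Multiplying by $\Delta t$, summing over $n=0,\dots,N-1$, using $\sum_n\|H^{n+1}\|^2\Delta t\le 2E_0$, $N\Delta t\le T$, and $\sum_{n=0}^{N-1}\|\Delta Q^n\|^2\Delta t\le\|\Delta Q^0\|^2 T+\sum_{k=1}^N\|\Delta Q^k\|^2\Delta t$, and finally choosing $\varepsilon$ small enough to absorb the $\varepsilon$-term into the left-hand side, I would obtain $\sum_{k=1}^N\|\Delta Q^k\|^2\Delta t\le M$ with $M$ independent of $\Delta t$ and $N$ (but depending on $T$, $\|Q^0\|_{H^2}$, the domain, and the model parameters).

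The step I expect to be the obstacle is precisely the bound on $r^{n+1}P(Q^n)$: the auxiliary variable $r^{n+1}$ only inherits an $L^2$ bound from the energy, while $P$ grows linearly, so one is forced to control $Q^n$ in $L^\infty$, and Agmon's inequality buys this only at the cost of a power $d/4<1$ of $\|Q^n\|_{H^2}$ — exactly the quantity under estimate. The argument nevertheless closes because this power is strictly sublinear in $\|\Delta Q^n\|^2$ (hence absorbable after summation) and because it is evaluated at the previous index $n$ rather than at $n+1$, so the summation unrolls with no real circularity. A variant, closer to the $r$-versus-$r(Q)$ comparison developed later in the paper, would instead bound $r^n$ in $L^3$ (or in $L^4$ when $d=2$), using that $r^{n+1}-r^n-\bigl(r(Q^{n+1})-r(Q^n)\bigr)$ is pointwise $O(|Q^{n+1}-Q^n|^2)$ while $\sum_n\|\nabla(Q^{n+1}-Q^n)\|^2$ is summable by the energy identity; I find the Agmon route above shorter.
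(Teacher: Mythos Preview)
Your proof is correct and follows essentially the same route as the paper: express $L\Delta Q^{k+1}=H^{k+1}+r^{k+1}P(Q^k)$, bound $\|r^{k+1}P(Q^k)\|$ by $\|r^{k+1}\|\,\|P(Q^k)\|_{L^\infty}$, control $\|P(Q^k)\|_{L^\infty}\le L_P\|Q^k\|_{L^\infty}$ via Agmon's inequality to obtain a sublinear power of $\|\Delta Q^k\|$, and absorb after summation using Young. The paper writes the Agmon step in $d=3$ form ($\|Q^k\|_{L^\infty}^2\le C\|Q^k\|_{H^1}\|Q^k\|_{H^2}\le C(1+\|\Delta Q^k\|)$) and jumps directly to $\|\Delta Q^{k+1}\|^2\le C(1+\|H^{k+1}\|^2)+\tfrac12\|\Delta Q^k\|^2$; your dimension-tracked exponent $d/2<2$ and your explicit treatment of the $L^2$ bound on $Q^n$ in the Neumann case are refinements the paper glosses over, but the skeleton is identical.
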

    \begin{proof}
    Using definition of $H^{n+1}$, we have
    \begin{equation}
        \label{eq:Laplace_Q_L2}
        \begin{aligned}
            \|\Delta Q^{k+1}\|^2=\|\frac{1}{L}\left(H^{k+1}+r^{k+1}P(Q^k)\right)\|^2
           % \leq C\left(\|H^{k+1}\|^2+\|r^{k+1}P(Q^k)\|^2 \right)
            \leq C\left(\|H^{k+1}\|^2+\|P(Q^k)\|_{L^\infty}^2\|r^{k+1}\|^2 \right).
      \end{aligned}
    \end{equation}
It follows from Lemma \ref{lem:energy_stability}, Agmon's inequality and Lipschitz continuity of $P(Q)$ that
\begin{equation}
\label{eq:Laplace_Q_L2_2}
\begin{aligned}
 \|\Delta Q^{k+1}\|^2
 %&\leq C\left(\|H^{k+1}\|^2+\|Q^k\|_{L^\infty}^2\|r^{k+1}\|^2 \right)\\
 &\leq C\left(\|H^{k+1}\|^2+\|\Delta Q^k\|+1 \right)\leq C\left(1+\|H^{k+1}\|^2\right)+\frac{1}{2}\|\Delta Q^k\|^2.
\end{aligned}
\end{equation}
Multiplying $\Delta t$ on both sides and summing from $k=0$ to $k=N-1$, we have
    \begin{equation}
        \label{eq:Delta_Q_sum_estimate}
        \frac{1}{2}\|\Delta Q^N\|^2\Delta t+\frac{1}{2}\sum_{k=1}^N\|\Delta Q^k\|^2\Delta t\leq \frac{1}{2}\|\Delta Q^0\|^2\Delta t+\sum_{k=1}^N C\left(1+\|H^{k+1}\|^2\right)\Delta t,
    \end{equation}
    which is bounded uniformly in $\Delta t$ thanks to the discrete energy estimate given in Lemma \ref{lem:energy_stability}.
    
    \end{proof}

\section{Convergence Analysis}
\subsection{Higher order energy inequality}
Regarding the existence and regularity of results on Q-tensor models, one common condition necessary to obtain a strong solution is the uniform $H^2$ bound of the Q-tensor in time. However, the derivation of this result highly depends on the integrability of $r(Q)$. Specifically, $r(Q)$ is a quadratic function concerning $Q$. Its regularity follows from properties of $Q$. It is hard to be obtained in our reformulated system because it is not apparent to conclude that $r^k=O(|Q^k|^2)$. The only existing regularity we can use is the $L^2$ integrability of $r^k$. 

We will address this problem by explicitly estimating the difference between the auxiliary variable $r^k$ and $r(Q^k)$ for each $k\in\mathbb{N}$. By Taylor expansion for matrix-valued functions\cite{turnbull_1930}, 
\begin{equation}\label{eq:rQtaylor}
    r(Q^{k+1})-r(Q^k)=P(Q^k):(Q^{k+1}-Q^k)+R_k=r^{k+1}-r^k+R_k,
\end{equation}
where the remainder $R_k$ satisfies
$
  %  \label{eq:rQremainder}
    \lvert R_k\rvert
    %\leq \lvert\frac{\delta P}{\delta Q}\left(\varepsilon_kQ_k+(1-\varepsilon_k)Q_{k+1}\right)\rvert\,\lvert Q^{k+1}-Q^k\rvert^2
    \leq C\lvert Q^{k+1}-Q^k\rvert^2$
for some $C>0$ due to the Lipschitz continuity of $P$.
%which implies that $\lvert\frac{\delta P}{\delta Q}(\cdot)\rvert\leq C$ for some fixed constant $C>0$. 
Then it follows from \eqref{eq:rQtaylor} that
\begin{equation}
    \label{eq:r_difference}
    \left(r^{k+1}-r(Q^{k+1})\right) -\left(r^k-r(Q^k)\right)=R_k.
\end{equation}
Taking sum from $k=0$ to $k=n-1$ and using the fact that $r^0=r(Q^0)$, it gives
\begin{equation}\label{eq:V_n}
    V_n\coloneqq\lvert r^n-r(Q^n)\rvert=\left\lvert\sum_{k=0}^{n-1} R_k\right\rvert\leq C\sum_{k=0}^{n-1}\left\lvert Q^{k+1}-Q^k\right\rvert^2.
\end{equation}
Lemma \ref{lem:Solvability_scheme} ensures us to take gradient of \eqref{eq:Dtr} to get
\begin{equation}
    \label{eq:gradient_r}
    \nabla r^{k+1}-\nabla r^k=\nabla \left(P(Q^k):(Q^{k+1}-Q^k)\right)=R_k^{(1)}+P(Q^k):(\nabla Q^{k+1}-\nabla Q^k),
\end{equation}
where 
$
    %\label{eq:R1nestimate}
    \lvert R_k^{(1)}\rvert= \lvert \nabla P(Q^k)\rvert\,\lvert Q^{k+1}-Q^k\rvert\leq C\lvert\nabla Q^k\rvert\,\lvert Q^{k+1}-Q^k\rvert.$ 
 Using chain rule, we have
\begin{equation}\label{eq:gradient_r(Q)}
\begin{aligned}
     &\quad\,\,\nabla r(Q^{k+1})-\nabla r(Q^k)\\
     &=P(Q^{k+1}):\nabla Q^{k+1}-P(Q^k):\nabla Q^k\\ &=\left(P(Q^{k+1})-P(Q^k)\right)\,\nabla Q^{k+1}+P(Q^k):(\nabla Q^{k+1}-\nabla Q^k)\coloneqq R_k^{(2)}+P(Q^k):(\nabla Q^{k+1}-Q^k).
\end{aligned}
\end{equation}
Using Lipschitz continuity of $P$, we obtain $
   %2nestimate}
    \lvert R_k^{(2)}\rvert \leq C\lvert \nabla Q^{k+1}\rvert\lvert Q^{k+1}-Q^k\rvert$.
Now we yield
\begin{equation}
    \label{eq:nablarestimate}
    \left(\nabla r^{k+1}-\nabla r(Q^{k+1})\right)-\left(\nabla r^k-\nabla r(Q^k)\right)=R_k^{(1)}-R_k^{(2)}.
\end{equation}
Taking sum over $k$ from $k=0$ to $k=n-1$, we have
\begin{equation}
\label{eq:D_n}
    D_n\coloneqq\lvert\nabla r^n-\nabla r(Q^n)\rvert=\left\lvert \sum_{k=0}^{n-1} (R_k^{(1)}-R_k^{(2)})\right\rvert\leq C\sum_{k=0}^{n-1}\left[(\lvert \nabla Q^k\rvert + \lvert \nabla Q^{k+1}\rvert)\,\lvert Q^{k+1}-Q^k\rvert\right].
\end{equation}
\iffalse
The relation of $\Delta Q^{n}$ and $H^{n}$ can be found as
\begin{equation}
    \label{eq:LaplacianQ_H}
    \begin{aligned}
     \|\Delta Q^{n}\|^2&\leq C\|H^{n}\|^2+C\|r^{n}P(Q^{n-1})\|^2\\
    &\leq C\|H^{n}\|^2+C\|r(Q^{n})P(Q^{n-1})\|^2+C\|V_{n}P(Q^{n-1})\|^2\\
    &\leq C\|H^{n}\|^2+C\|r(Q^{n})\|^2_{L^3}\,\|P(Q^{n-1})\|^2_{L^6}+C\|V_{n}P(Q^{n-1})\|^2\\
    &\leq C\|H^{n}\|^2+C\| Q^{n}\|_{H^1}^2\,\| Q^{n-1}\|_{H^1}^2+C\|V_{n}P(Q^{n-1})\|^2\\
    &\leq C(1+\|H^{n}\|^2)+C\|V_{n}P(Q^{n-1})\|^2
    \end{aligned}
\end{equation}
\fi
\iffalse
We need to carry out detailed estimate for $\|V_{n}P(Q^{n-1})\|$. Using \eqref{eq:V_n}, we have
\begin{equation}
    \label{eq:V_nPQ}
    \begin{aligned}
    \|V_nP(Q^{n-1})\|^2&\leq C\int_\Omega \lvert P(Q^{n-1})\rvert^2\,\left(\sum_{k=0}^{n-1} \lvert Q^{k+1}-Q^k\rvert^2  \right)^2\,dx\\
    &\leq \frac{C}{\Delta t}\,\sum_{k=0}^{n-1}\int_\Omega \lvert Q^{n-1}\rvert^2 \,\lvert Q^{k+1}-Q^k\rvert^4\,dx\\
    &\leq \frac{C}{\Delta t}\,\sum_{k=1}^n\|Q^{n-1}\|_{L^6}^2\,\|Q^{k+1}-Q^k\|_{L^6}^4\\
    &\leq \frac{C}{\Delta t} \|\nabla Q^{n-1}\|^2\,\sum_{k=0}^{n-1}\|\nabla Q^{k+1}-\nabla Q^k\|^4\\
    &\leq C\Delta t\,\left(\sum_{k=0}^{n-1}\|\frac{\nabla Q^{k+1}-\nabla Q^k}{\Delta t}\|^2\Delta t\right)^2
    \end{aligned}
\end{equation}
\fi
To obtain strong solution of the system, we need higher order energy estimate for the discrete solution $Q^n$, more detailedly, we will try to deduce a uniform $H^2$ bound of $\Qnew$ which leads to
\begin{equation}
    \label{eq:Laplace_Q_estimate}
    \begin{aligned}
   \frac{L}{2}\|\Delta Q^{n+1}\|^2-\frac{L}{2}\|\Delta Q^n\|^2&\leq\langle L\Delta Q^{n+1}, \Delta Q^{n+1}-\Delta Q^n\rangle\\
    &=\langle H^{n+1}-r^{n+1}P(Q^n),\, \Delta Q^{n+1}-\Delta Q^n\rangle\\
    &=-\frac{1}{M}\,\|\frac{\nabla Q^{n+1}-\nabla Q^n}{\Delta t} \|^2\Delta t+\langle \nabla\left(r^{n+1}P(Q^n)\right),\,\frac{\nabla Q^{n+1}-\nabla Q^n}{\Delta t }\rangle\,\Delta t\\
    &\leq -\frac{1}{2M}\|\frac{\nabla Q^{n+1}-\nabla Q^n}{\Delta t}\|^2\Delta t+C\|\nabla\left(r^{n+1}P(Q^n)\right) \|^2\Delta t.
    \end{aligned}
\end{equation}
It intrigues us to provide the following estimate for $\|\nabla(\rnew\Pold)\|$.

\begin{lemma}
\label{lem:Estimate_of_gradient_rPQ}
    For fixed $\Delta t>0$ and each $n\in[0,\floor{\frac{T}{\Delta t}}]$, we have for some constant $C>0$ such that
    \begin{equation}
        \label{eq:Estimate_of_gradient_rPQ}
        \|\nabla\left(r^{n+1}P(Q^n)\right)\|^2\leq C+C(1+\|\Delta Q^n\|^2)(1+W_n+W_{n+1}+W_n^2\Delta t+W_{n+1}\Delta t)
    \end{equation}
\end{lemma}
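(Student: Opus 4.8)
The plan is to expand $\nabla(r^{n+1}P(Q^n))$ by the product rule and to control each factor by systematically replacing the auxiliary quantities $r^{n+1},\nabla r^{n+1}$ with the energy--quadratization quantities $r(Q^{n+1}),\nabla r(Q^n)$ (plus one controlled one‑step increment), and the accumulated remainders $V_{n+1}=|r^{n+1}-r(Q^{n+1})|$, $D_n=|\nabla r^n-\nabla r(Q^n)|$, whose pointwise bounds are recorded in \eqref{eq:V_n} and \eqref{eq:D_n} (and whose $H^1$‑regularity is guaranteed by Lemma \ref{lem:Solvability_scheme}). Throughout, $W_n$ denotes the accumulated discrete dissipation $\sum_{k=0}^{n-1}\|(\nabla Q^{k+1}-\nabla Q^k)/\Delta t\|^2\Delta t$ that enters \eqref{eq:Laplace_Q_estimate} with the favourable sign, and I abbreviate $\gamma_k=\|\nabla(Q^{k+1}-Q^k)\|^2/\Delta t$, so that $\sum_{k=0}^{n-1}\gamma_k=W_n$, $\gamma_n\le W_{n+1}$, and $\|Q^{k+1}-Q^k\|_{H^1}^2\le C\gamma_k\Delta t$ by Poincar\'e applied to the increments (with a harmless $O(\Delta t^2)$ correction in the Neumann case, using that $\int_\Omega(Q^{k+1}-Q^k)=M\Delta t\int_\Omega H^{k+1}=-M\Delta t\int_\Omega r^{k+1}P(Q^n)=O(\Delta t)$).

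First I would collect the local ingredients. From the Lipschitz continuity of $P$ and \eqref{eq:P}, $|\nabla P(Q^n)|\le C|\nabla Q^n|$ and $|P(Q^n)|\le C(1+|Q^n|)$. From Agmon's inequality together with the uniform energy bound $\|Q^k\|_{H^1}\le C$ (Lemma \ref{lem:energy_stability} plus Poincar\'e) and elliptic regularity $\|Q^k\|_{H^2}\le C(1+\|\Delta Q^k\|)$, one has $\|P(Q^n)\|_{L^\infty}^2\le C(1+\|\Delta Q^n\|)$ and $\|Q^{k+1}-Q^k\|_{L^\infty}^2\le C(\gamma_k\Delta t)^{1/2}(1+\|\Delta Q^k\|+\|\Delta Q^{k+1}\|)$; from the Sobolev embedding $H^1\hookrightarrow L^6$ and interpolation in dimension $d\le 3$, $\|\nabla Q^k\|_{L^6}^2\le C(1+\|\Delta Q^k\|^2)$, $\|\nabla Q^k\|_{L^3}^2\le C(1+\|\Delta Q^k\|)$, and $\|Q^{n+1}\|_{L^6}\le C$.

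Now split $\nabla(r^{n+1}P(Q^n))=(\nabla r^{n+1})P(Q^n)+r^{n+1}\nabla P(Q^n)$. For the second term, writing $r^{n+1}=r(Q^{n+1})+V_{n+1}$ (cf. \eqref{eq:rQtaylor}) and using $|\nabla P(Q^n)|\le C|\nabla Q^n|$, it is bounded by $C\int(|r(Q^{n+1})|^2+V_{n+1}^2)|\nabla Q^n|^2$; the $r(Q^{n+1})$‑part is $\le C(1+\|Q^{n+1}\|_{L^6}^4)\|\nabla Q^n\|_{L^6}^2\le C(1+\|\Delta Q^n\|^2)$, while for the $V_{n+1}$‑part I would estimate $V_{n+1}$ in $L^\infty$: by \eqref{eq:V_n}, $\|V_{n+1}\|_{L^\infty}\le C\sum_{k=0}^n\|Q^{k+1}-Q^k\|_{L^\infty}^2$, and Cauchy--Schwarz together with $\sum_{k=0}^n(1+\|\Delta Q^k\|+\|\Delta Q^{k+1}\|)^2\le C/\Delta t$ (Lemma \ref{lem:Delta_Q_estimate}) and $\sum_{k=0}^n\gamma_k\Delta t=W_{n+1}\Delta t$ give $\|V_{n+1}\|_{L^\infty}^2\le CW_{n+1}$, so this part is $\le CW_{n+1}$. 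For the first term I would avoid the ill‑conditioned quantity $\nabla r(Q^{n+1})$ (which unavoidably costs $\|\Delta Q^{n+1}\|$) and instead use the one‑step decomposition coming from \eqref{eq:gradient_r} with $k=n$ and $\nabla r^n=\nabla r(Q^n)+D_n$,
\begin{equation*}
\nabla r^{n+1}=\bigl(P(Q^n):\nabla Q^n\bigr)+D_n+R_n^{(1)}+P(Q^n):\nabla(Q^{n+1}-Q^n).
\end{equation*}
Multiplying by $P(Q^n)$ and using $\|P(Q^n)\|_{L^\infty}^2\le C(1+\|\Delta Q^n\|)$: the first term yields $\le C\|P(Q^n)\|_{L^\infty}^4\|\nabla Q^n\|^2\le C(1+\|\Delta Q^n\|^2)$; the $R_n^{(1)}$ and $P(Q^n):\nabla(Q^{n+1}-Q^n)$ terms each yield $\le C(1+\|\Delta Q^n\|^2)\gamma_n\Delta t\le C(1+\|\Delta Q^n\|^2)W_{n+1}\Delta t$ (via $|R_n^{(1)}|\le C|\nabla Q^n||Q^{n+1}-Q^n|$, the H\"older split $\|\nabla Q^n\|_{L^3}\|Q^{n+1}-Q^n\|_{L^6}$, and $\|Q^{n+1}-Q^n\|_{H^1}^2\le C\gamma_n\Delta t$); and the $D_n$ term yields $\le C(1+\|\Delta Q^n\|)\|D_n\|^2$, which is the crux. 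Expanding $D_n$ by \eqref{eq:D_n}, estimating each summand in $L^2$ by $\|\nabla Q^k\|_{L^3}\|Q^{k+1}-Q^k\|_{L^6}\le C(1+\|\Delta Q^k\|)^{1/2}(\gamma_k\Delta t)^{1/2}$, and applying the discrete Cauchy--Schwarz with $\sum_{k=0}^{n-1}(1+\|\Delta Q^k\|+\|\Delta Q^{k+1}\|)\le C/\Delta t$ and $\sum_{k=0}^{n-1}\gamma_k\Delta t=W_n\Delta t$, one obtains $\|D_n\|^2\le CW_n$; a cruder interpolation of these same products (or the crude bound $V_{n+1}^2\le\frac{C}{\Delta t}\sum_k|Q^{k+1}-Q^k|^4$) produces along the way the weaker terms $W_n^2\Delta t$, $W_{n+1}\Delta t$. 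Collecting everything gives exactly \eqref{eq:Estimate_of_gradient_rPQ}.

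The main obstacle is precisely the control of the accumulated remainders $D_n$ and $V_{n+1}$: the increments $Q^{k+1}-Q^k$ carry no pointwise smallness, only the weak $\Delta t$‑summability furnished by Lemmas \ref{lem:energy_stability} and \ref{lem:Delta_Q_estimate}. The decisive point is to measure all product terms through $L^3$--$L^6$ H\"older splits rather than $L^\infty$--$L^2$ ones: then the half‑powers of $\|\Delta Q^k\|$ are absorbed by $\sum_k\|\Delta Q^k\|^2\Delta t\le M$ (Lemma \ref{lem:Delta_Q_estimate}), while the factor $1/\Delta t$ produced by the crude discrete Cauchy--Schwarz (the sums have $O(1/\Delta t)$ terms) is exactly cancelled by the $\Delta t$ carried by $\|Q^{k+1}-Q^k\|_{H^1}^2\lesssim\gamma_k\Delta t$, leaving the clean quantities $W_n$ and $W_{n+1}$.
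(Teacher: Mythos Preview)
Your argument is correct and rests on the same mechanism as the paper's proof: replace the auxiliary quantities $r^{n+1},\nabla r^{n+1}$ by the true energy-quadratization values $r(Q),\nabla r(Q)$, control the accumulated remainders $V$ and $D$ through the pointwise bounds \eqref{eq:V_n}--\eqref{eq:D_n}, and balance the $1/\Delta t$ produced by discrete Cauchy--Schwarz against the $\Delta t$ carried by $\|Q^{k+1}-Q^k\|_{H^1}^2$, absorbing the stray powers of $\|\Delta Q^k\|$ via Lemma~\ref{lem:Delta_Q_estimate}.

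The decomposition, however, differs in the choice of time indices. The paper writes $r^{n+1}\nabla P(Q^n)=(r^{n+1}-r^n)\nabla P(Q^n)+r^n\nabla P(Q^n)$, pairs $r^n$ with $r(Q^n)$ (remainder $V_n$), and pairs $\nabla r^{n+1}$ directly with $\nabla r(Q^{n+1})$ (remainder $D_{n+1}$); the term $\|\nabla r(Q^{n+1})P(Q^n)\|^2$ then costs an explicit $\|\Delta Q^{n+1}\|^2$, which is absorbed because in the paper $W_{n+1}=\tfrac{L}{2}\|\Delta Q^{n+1}\|^2+\tfrac{1}{2M}\sum_{k\le n}\|(\nabla Q^{k+1}-\nabla Q^k)/\Delta t\|^2\Delta t$ (note that your identification of $W_n$ with only the dissipation sum is not quite the paper's definition). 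Your one-step maneuver $\nabla r^{n+1}=\nabla r(Q^n)+D_n+R_n^{(1)}+P(Q^n)\!:\!\nabla(Q^{n+1}-Q^n)$ instead reduces everything to $\nabla r(Q^n)$ and $D_n$, so $\|\Delta Q^{n+1}\|^2$ never appears; you also bound $V_{n+1}$ in $L^\infty$ via Agmon, whereas the paper handles $V_n$ through an $L^6$--$L^6$--$L^6$ split. Both routes land on \eqref{eq:Estimate_of_gradient_rPQ}; yours is marginally sharper in that the $\|\Delta Q^{n+1}\|^2$ part of $W_{n+1}$ is not actually needed on the right-hand side.
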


\begin{proof}
 
We split this term by using  \eqref{eq:V_n} and \eqref{eq:D_n} to approximate, that is,
\begin{equation}
    \label{appen:nabla_rP}
   \begin{aligned}
   &\quad\,\, \|\nabla\left(r^{n+1}P(Q^n)\right)\|^2\\
   &=\|\nabla r^{n+1}P(Q^n)+(r^{n+1}-r^n)\,\nabla P(Q^{n})+r^n\nabla P(Q^{n})\|^2\\
   &=\|\nabla r^{n+1}P(Q^n)+P(Q^n):(\Qnew-\Qold)\,\nabla P(Q^{n})+r^n\nabla P(Q^{n})\|^2\\
    &\leq C\Big(\| \nabla r(Q^{n+1})\,P(Q^n)\|^2+\|r(Q^{n})\,\nabla P(Q^n)\|^2+\|P(Q^n):(\Qnew-\Qold)\,\nabla P(Q^{n})\|^2\\
    &\quad\quad\quad\,\,+\|D_{n+1}\,P(Q^n)\|^2+\|V_{n}\,\nabla P(Q^n)\|^2\Big)\coloneqq C \sum_{m=1}^5 I_m.
    \end{aligned}
\end{equation}
We will estimate $I_1$ to $I_5$ separately. For $I_1$, taking derivative of $r$ and use Lipschitz continuity of $P(Q)$, we have
\begin{equation}
    \label{eq:I1_1}
    \begin{aligned}
    I_1=\int_\Omega \lvert \nabla r(Q^{n+1})\rvert^2\,\lvert P(Q^n)\rvert^2\,dx&\leq\int_\Omega \lvert P(Q^{n+1})\rvert^2\,\lvert\nabla Q^{n+1}\rvert^2\,\lvert P(Q^n)\rvert^2\,dx\leq C\int_\Omega\lvert \nabla Q^{n+1}\rvert^2 \lvert Q^{n+1}\rvert^2\,\lvert Q^n\rvert^2\,dx.
    \end{aligned}
\end{equation}
Then by Holder's inequality and Sobolev's inequality, we obtain
\begin{equation}
    \label{eq:I1_2}
    \begin{aligned}
    I_1\leq C\|\nabla \Qnew\|_{L^6}^2\,\|\Qnew\|_{L^6}^2\,\|\Qold\|_{L^6}^2
    %&\leq C\,\left(\int_\Omega \lvert\nabla Q^{n+1}\rvert^6\,dx\right)^{\frac{1}{3}}\,\left(\int_\Omega \lvert Q^{n+1}\rvert^6\,dx\right)^{\frac{1}{3}}\,\left(\int_\Omega \lvert Q^{n}\rvert^6\,dx\right)^{\frac{1}{3}}\\
    &\leq C\|Q^{n+1}\|_{H^2}^2\,\|Q^{n+1}\|_{H^1}^2\,\|Q^n\|_{H^1}^2\leq C(1+\|\Delta Q^{n+1}\|^2 )
    \end{aligned}
\end{equation}
We will treat $I_2$ in similar ways. Lipschitz continuity and Holder's inequality leads to
\begin{equation}
    \label{eq:I2_1}
    \begin{aligned}
    I_2=\int_\Omega \lvert r(Q^{n})\rvert^2\,\lvert \nabla P(Q^n)\rvert^2\,dx&\leq C\int_\Omega \lvert r(Q^{n})\rvert^2\,\lvert \nabla Q^n\rvert^2\,dx\leq C\left(\int_\Omega \lvert r(Q^{n})\rvert^3\,dx\right)^{\frac{2}{3}}\,\left(\int_\Omega \lvert \nabla Q^n\rvert^6\,dx\right)^{\frac{1}{3}}.
    \end{aligned}
\end{equation}
It then follows from definition of $r(\Qold)$ and Sobolev's embedding theorem that
\begin{equation}
    \label{eq:I2_2}
    I_2
    %\leq C\left(\int_\Omega(1+\|Q^n\|^6)\right)^{\frac{2}{3}}\,\left(\int_\Omega \lvert \nabla Q^n\rvert^6\,dx\right)^{\frac{1}{3}}
    \leq C(1+\|Q^{n}\|^2_{L^6})\,\|Q^n\|_{H^2}^2 \leq C\|Q^{n}\|^2_{H^1}\,\|Q^n\|_{H^2}^2\leq C(1+\|\Delta Q^n\|^2).
\end{equation}
To bound the term $I_3$, we notice that
\begin{equation}
    \label{eq:I3_1}
    \begin{aligned}
    I_3=\int_\Omega \lvert \Pold:(\Qnew-\Qold)\rvert^2\,\lvert \nabla\Pold\rvert^2\,dx\leq C\int_\Omega \lvert \Qold\rvert^2\,\lvert \Qnew-\Qold\rvert^2\,\lvert \nabla \Qold\rvert^2\,dx,
    \end{aligned}
\end{equation}
and we can deduce
\begin{equation}
    \label{eq:I3_2}
    \begin{aligned}
    I_3\leq C \|\Qold\|_{L^6}^2\,\|\Qnew-\Qold\|_{L^6}^2\,\|\nabla \Qold\|^2_{L^6}&\leq C\|\Qold\|_{H^1}^2\|\Qold\|_{H^2}^2\left\|\frac{\nabla \Qnew-\nabla\Qold}{\Delta t}\right\|^2\,\Delta t^2\\
    &\leq C(1+\|\Delta \Qold\|^2)\,\Delta t\,\left[\sum_{k=0}^{n}\left\|\frac{\nabla Q^{k+1}-\nabla Q^k}{\Delta t}\right\|^2\,\Delta t\right]
    \end{aligned}
\end{equation}
The remaining problem is to control the error terms generated from introduction of the auxiliary variable. For term $I_4$, we have
\begin{equation}
    \label{eq:I4_1}
    \begin{aligned}
    I_4=\int_\Omega \lvert D_{n+1}\rvert^2\,\lvert P(Q^n)\rvert^2\,dx\leq C\int_\Omega\left[ \sum_{k=0}^n\left(\lvert \nabla Q^k\rvert+\lvert \nabla Q^{k+1}\rvert\right)\,\left\lvert Q^{k+1}-Q^k\right\rvert \right]^2\,\lvert Q^n\rvert^2\,dx.
    \end{aligned}
\end{equation}
Bounding $\lvert Q^n\rvert$ by $\|Q^n\|_{L^\infty}$ and using Cauchy-Schwarz inequality, we will obtain
\begin{equation}
    \label{eq:I4_2}
    I_4
    %\leq C\int_\Omega \left[ \sum_{k=0}^n\left(\lvert \nabla Q^k\rvert+\lvert \nabla Q^{k+1}\rvert\right)^2\,\lvert Q^{k+1}-Q^k\rvert^2\right]\lvert Q^n\rvert^2\,dx
    \leq C \|Q^n\|_{L^{\infty}}^2\sum_{k=0}^n\left[\left(\int_\Omega \lvert \nabla Q^k\rvert^4\,dx\right)^{\frac{1}{2}}+\left(\int_\Omega\lvert \nabla Q^{k+1}\rvert^4\,dx\right)^{\frac{1}{2}}\right]\left(\int_\Omega \lvert Q^{k+1}-Q^k\rvert^4\,dx\right)^{\frac{1}{2}}.
\end{equation}
Agmon's inequality and Sobolev's inequality enable us to estimate $I_4$ by
\begin{equation}
    \label{eq:I4_3}
    \begin{aligned}
    I_4&\leq C(1+\|\Delta Q^n\|)\,\sum_{k=0}^n\left( 1+\|\Delta Q^k\|^2+\|\Delta Q^{k+1}\|^2 \right)\,\left\|\frac{\nabla Q^{k+1}-\nabla Q^k}{\Delta t}\right\|^2\,\Delta t^2\\
    %\\&\leq C(1+\|\Delta Q^n\|)\,\left[ \sum_{k=0}^n\left( 1+\|\Delta Q^k\|^2+\|\Delta Q^{k+1}\|^2 \right)\,\Delta t\right]\,\left[\sum_{k=0}^n \|\frac{\nabla Q^{k+1}-\nabla Q^k}{\Delta t}\|^2\,\Delta t \right]\\
    &\leq C(1+\|\Delta Q^n\|^2)
    %\,\left[\sum_{k=0}^{n+1}\left(1+\|\Delta Q^k\|^2\right)\,\Delta t\right]\,
    \,\left[\sum_{k=0}^n\left\|\frac{\nabla Q^{k+1}-\nabla Q^k }{\Delta t}\right\|^2 \,\Delta t\right],
    \end{aligned}
\end{equation}
\iffalse
Recall estimate \eqref{eq:V_nPQ}, we have
\begin{equation}
    \label{appen:I3_cont}
    \begin{aligned}
    I_3&\leq C(1+\|\Delta Q^n\|^2)\,\left(\sum_{k=1}^{n+1}\left(1+\|H^{k}\|^2+\Delta t\left(\sum_{j=0}^{k-1}\|\frac{\nabla Q^{j+1}-\nabla Q^j}{\Delta t}\|^2\Delta t\right)^2\right)\,\Delta t\right)\,\left(\sum_{k=1}^n\|\frac{\nabla Q^{n+1}-\nabla Q^n}{\Delta t} \|^2 \,\Delta t\right)\\
    &\leq C(1+\|\Delta Q^n\|^2)\,\left(\sum_{k=1}^n\|\frac{\nabla Q^{n+1}-\nabla Q^n}{\Delta t} \|^2 \,\Delta t\right)+ C(1+\|\Delta Q^n\|^2)\,\left(\sum_{k=1}^n\|\frac{\nabla Q^{n+1}-\nabla Q^n}{\Delta t} \|^2 \,\Delta t\right)^3\,\Delta t\\
    &\leq C(1+\|H^n\|^2)S_{n+1}+CS_{n+1}^3\Delta t+CS_{n+1}^4\,\Delta t.
    \end{aligned}
\end{equation}
\fi
where we have used Lemma \ref{lem:Delta_Q_estimate} in the last inequality. Using Cauchy-Schwarz inequality, we yield
\begin{equation}
    \label{eq:I5}
    \begin{aligned}
    I_5=\int_\dom \lvert V_{n} \rvert^2\lvert\nabla P(Q^n)\rvert^2\,dx \leq C\int_\dom \left[ \sum_{k=0}^{n-1}\lvert Q^{k+1}-Q^k\rvert^2  \right]^2\,\lvert \nabla Q^n\rvert^2\,dx\leq \frac{C}{\Delta t}\sum_{k=0}^{n-1} \int_\dom \lvert \nabla Q^n\rvert^2\lvert Q^{k+1}-Q^k\rvert^4\,dx.
    \end{aligned}
\end{equation}
Applying Holder's inequality and Sobolev's inequality, we get
\begin{equation}
    \label{eq:I5_2}
    \begin{aligned}
    I_5&\leq \frac{C}{\Delta t}\sum_{k=0}^{n-1}\left(\int_\dom \lvert \nabla Q^n\rvert^6\,dx\right)^{\frac{1}{3}}\,\left( \int_\dom\lvert Q^{k+1}-Q^k\rvert^6\,dx \right)^{\frac{2}{3}}\\
    &\leq \frac{C}{\Delta t}\sum_{k=0}^{n-1}\|Q^n\|_{H^2}^2\left\|\frac{\nabla Q^{k+1}-\nabla Q^k}{\Delta t}\right \|^4\Delta t^4\leq C\Delta t(1+\|\Delta Q^n\|^2)\,\left(\sum_{k=0}^{n-1}\|\frac{\nabla Q^{k+1}-\nabla Q^k}{\Delta t}\|\Delta t\right)^2
    \end{aligned}
\end{equation}
Combining theses estimates together, we have shown the lemma.

\end{proof}

The inner products that appeared in the formula are well-defined due to regularity results given in Lemma \ref{lem:Solvability_scheme}.
Then we immediately have the following lemma, which provides the desired bound for the Q-tensor term.
\begin{lemma}
    \label{lem:Wbound}
    We define
    \begin{equation}
    \label{eq:Wn}
    W_n\coloneqq \frac{L}{2}\,\|\Delta Q^n\|^2+\frac{1}{2M}\sum_{k=0}^{n-1}\|\frac{\nabla Q^{k+1}-\nabla Q^k}{\Delta t}\|^2\,\Delta t,
\end{equation}
with $n\geq 1$. Then for small enough $\Delta t$, there exists $C=C(W_0,T)$ such that
$
    %\label{eq:W_bound_lemma}
    W_n\leq C(W_0,T)$
for any $n>1$.
\end{lemma}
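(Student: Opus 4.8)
The plan is to run a discrete Gronwall / continuation argument for the quantity $W_n$ of \eqref{eq:Wn}, feeding the higher-order energy identity \eqref{eq:Laplace_Q_estimate} with the bound of Lemma \ref{lem:Estimate_of_gradient_rPQ} and exploiting the time-$L^1$ estimate $\sum_k\|\Delta Q^k\|^2\Delta t\le M$ of Lemma \ref{lem:Delta_Q_estimate}.

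First I would rewrite \eqref{eq:Laplace_Q_estimate} as a one-step bound: adding $\frac{1}{2M}\sum_{k=0}^{n-1}\|\frac{\nabla Q^{k+1}-\nabla Q^k}{\Delta t}\|^2\Delta t$ to both sides gives
\[
   W_{n+1}\le W_n+C\,\|\nabla(r^{n+1}P(Q^n))\|^2\,\Delta t ,
\]
with $W_0:=\tfrac{L}{2}\|\Delta Q^0\|^2$, and hence after telescoping $W_N\le W_0+C\sum_{n=0}^{N-1}\|\nabla(r^{n+1}P(Q^n))\|^2\Delta t$. Then I substitute Lemma \ref{lem:Estimate_of_gradient_rPQ} and use $\|\Delta Q^n\|^2\le\frac{2}{L}W_n$ to split the right-hand sum into three groups: (i) a harmless $CT$ term; (ii) \emph{linear} terms $C\sum_n(1+\|\Delta Q^n\|^2)(W_n+W_{n+1})\Delta t$; (iii) \emph{super-linear} terms carrying an extra power of $\Delta t$, namely $C\sum_n(1+\|\Delta Q^n\|^2)(W_n^2+W_{n+1})\Delta t^2$. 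The structural point is that the weight $1+\|\Delta Q^n\|^2$ in (ii) is summable against $\Delta t$ with a bound depending only on $T$ and $W_0$ (Lemma \ref{lem:Delta_Q_estimate}), so (ii) has exactly discrete-Gronwall form; the genuine danger is (iii), where $\|\Delta Q^n\|^2\le\frac{2}{L}W_n$ makes the summand effectively cubic in $W_n$.

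To close the argument I use a continuation (bootstrap): fix a threshold $C_*$, to be chosen, and let $N^*$ be the largest index $\le\lfloor T/\Delta t\rfloor$ with $W_n\le C_*$ for all $n\le N^*$. On the range $n\le N^*$ the super-linear sum (iii) is bounded by $C\,C_*^2\,\Delta t\,(T+M+1)$, hence by $1$ once $\Delta t$ is small relative to $C_*$, and every implicit $W_{n+1}$ contribution — including the boundary term $W_N$ produced by reindexing the $W_{n+1}$-sum in (ii) — is absorbed into the left side because $\Delta t(1+\tfrac{2}{L}C_*)$ is small. Discrete Gronwall applied to the resulting inequality $W_N\le 2A+2C\sum_{m=0}^{N-1}\gamma_m W_m\Delta t$, with $\sum_m\gamma_m\Delta t$ bounded in terms of $T$ and $W_0$ only, then yields $W_N\le G_0$ for all $n\le N^*$, where $G_0=G_0(W_0,T)$ is \emph{independent} of $C_*$. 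One further application of the one-step bound at $n=N^*$ (again absorbing $\tfrac12 W_{N^*+1}$) gives $W_{N^*+1}\le 2G_0+1$; choosing $C_*:=2G_0+2$ contradicts $W_{N^*+1}>C_*$ unless $N^*=\lfloor T/\Delta t\rfloor$, which establishes $W_n\le C(W_0,T)$ for all $n$.

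\textbf{Main obstacle.} The essential difficulty is item (iii): Lemma \ref{lem:Estimate_of_gradient_rPQ} is super-linear in $W_n$, so a naive Gronwall estimate would only give finite-time blow-up. The two features that defuse it are the extra factors of $\Delta t$ attached to those terms — so that on any a priori bounded range they contribute only $O(\Delta t)$ after summation — and the time-$L^1$ bound of Lemma \ref{lem:Delta_Q_estimate}, which lets $1+\|\Delta Q^n\|^2$ act as an integrable Gronwall weight rather than a quantity one must control pointwise. The secondary subtlety is that $W_{n+1}$ appears implicitly on the right-hand side of the one-step bound; handling it by absorption is exactly why the admissible size of $\Delta t$ is allowed to depend on $C_*$, i.e. ultimately on $W_0$ and $T$.
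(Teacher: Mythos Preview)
Your proposal is correct and follows essentially the same approach as the paper: rewrite \eqref{eq:Laplace_Q_estimate} as a one-step recursion for $W_n$, feed in Lemma \ref{lem:Estimate_of_gradient_rPQ}, use an a priori bound on $W_n$ together with the extra $\Delta t$ to kill the super-linear terms, and close with discrete Gronwall weighted by $1+\|\Delta Q^n\|^2$ via Lemma \ref{lem:Delta_Q_estimate}. Your continuation/bootstrap framing is a slightly cleaner packaging of the paper's induction (the paper assumes $W_n$ bounded up to $N-1$, takes $\Delta t$ small relative to that bound, and then verifies the resulting Gronwall bound is independent of $N$), but the mathematical content is identical.
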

\begin{proof}
We will prove this lemma by induction. Firstly, noting that \eqref{eq:Laplace_Q_estimate} is equivalent to
\begin{equation}
    \label{eq:Sn_estimate}
  \frac{  W_{n+1}-W_n}{\Delta t}\leq C\|\nabla\left(r^{n+1}P(Q^n)\right) \|^2.
\end{equation}
We infer from Lemma \ref{lem:Estimate_of_gradient_rPQ} that
\begin{equation}
\label{eq:W_increment}
    W_{n+1}-W_n\leq C\Delta t +C(1+\|\Delta \Qold\|^2)\,\Delta t\,(1+W_{n+1}+W_n+W_n^2\Delta t+W_{n+1}\Delta t).
\end{equation}
for every $n>0$. Assuming $W_n$ is bounded for all $n\leq N-1$, we will prove that $W_N$ is also bounded by the same constant. We choose $\Delta t\leq \max\limits_{n\leq N-1}\{1, \frac{1}{W_n}\}$. Later we will show that such $\Delta t$ exists uniformly for all $n$.  Summing \eqref{eq:W_increment} from $n=0$ to $N-1$, we get
\begin{equation}
    \label{eq:W_sum_estimate}
    W_N\leq W_0+CT+C\Delta t\,\sum_{n=0}^{N-1}(1+\|\Delta \Qold\|^2)\,(W_{n+1}+W_n)
\end{equation}
By applying discrete Gronwall's inequality\cite[Lemma 2.1]{Shen_projection_error1996} and using the energy estimate Lemma \ref{lem:Delta_Q_estimate}, we conclude that
\begin{equation}
    \label{eq:W_bound}
    W_n\leq (W_0+CT)\,e^{C(M, T)},
\end{equation}
where $M$ is the constant (upper bound) given in Lemma \ref{lem:Delta_Q_estimate}. This is a uniform bound for $W_n$ independent of $n$ which implies that $\frac{1}{W_n}$ is lower bounded. So appropriate $\Delta t$ can always be found. Here we have shown that $W_n\leq (W_0+CT)\,e^{C(M, T)}\coloneqq C(W_0, T)$ for all $n>0$.
\end{proof}

This is a uniform $H^2$ estimate for Q-tensor term and provides higher regularity result of the derivative in discrete sense. To the end of this part, we turn to investigate the relations between $\nabla H^{n+1}$ and $\nabla \Delta Q^{n+1}$. Firstly, for each fixed $\Delta t$, $\nabla H^{n+1} = \frac{\nabla \Qnew-\nabla \Qold}{M\Delta t}\in H^1(\Omega)$. It implies $L\nabla \Delta \Qnew = \nabla H^{n+1}+\nabla(\rnew\Pold)\in L^2$ is well-defined.
Then by Lemma \ref{lem:Wbound} and Lemma \ref{lem:Estimate_of_gradient_rPQ}, we can bound $\|\nabla(\rnew\Pold)\|$ by constant and so 
\begin{equation}
    \label{eq:Q_H3_estimate}
    \sum_{n=0}^N\|\nabla \Delta Q^{n+1}\|^2\,\Delta t\leq C\left( \|H^{n+1}\|^2 + \|\nabla(\rnew\Pold)\|^2\right)\,\Delta t\leq C(1+\|H^{n+1}\|^2)\,\Delta t.
\end{equation}
This can be summarized as the following lemma.
\begin{lemma}
\label{lem:Q_H3_estimate}
The numerical solutions obtained from scheme \eqref{eq:DtQ}-\eqref{eq:Dtr} satisfies,
$
   % \label{eq:Q_H3_estimate_lemma}
    \sum_{n=0}^N\|\nabla \Delta Q^{n+1}\|^2\,\Delta t<C$,
for some constant $C>0$ and $N=\floor{\frac{T}{\Delta t}}$.
\end{lemma}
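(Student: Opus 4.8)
The goal is to bound $\sum_{n=0}^N \|\nabla \Delta Q^{n+1}\|^2\,\Delta t$ uniformly in $\Delta t$. The plan is to start from the identity $L\nabla\Delta Q^{n+1} = \nabla H^{n+1} + \nabla(r^{n+1}P(Q^n))$, which is justified because Lemma \ref{lem:Solvability_scheme} gives $Q^{n+1}\in H^2$ and $r^{n+1}\in H^1$, and because for fixed $\Delta t$ one has $\nabla H^{n+1} = \frac{\nabla Q^{n+1}-\nabla Q^n}{M\Delta t}\in H^1(\Omega)$, so the right-hand side lies in $L^2$. Squaring and using the triangle inequality together with $(a+b)^2\le 2a^2+2b^2$ gives $L^2\|\nabla\Delta Q^{n+1}\|^2 \le 2\|\nabla H^{n+1}\|^2 + 2\|\nabla(r^{n+1}P(Q^n))\|^2$. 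This reduces the problem to controlling the two terms on the right after multiplying by $\Delta t$ and summing.

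For the first term, I would observe that $\|\nabla H^{n+1}\|^2 = \frac{1}{M^2}\|\frac{\nabla Q^{n+1}-\nabla Q^n}{\Delta t}\|^2$, so that $\sum_{n=0}^N \|\nabla H^{n+1}\|^2\,\Delta t = \frac{1}{M^2}\sum_{n=0}^N \|\frac{\nabla Q^{n+1}-\nabla Q^n}{\Delta t}\|^2\,\Delta t$, which is exactly (a multiple of) the dissipation quantity appearing in the definition \eqref{eq:Wn} of $W_{N+1}$. By Lemma \ref{lem:Wbound}, $W_{N+1}\le C(W_0,T)$ uniformly, so this sum is bounded by a constant independent of $\Delta t$. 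For the second term, I would invoke Lemma \ref{lem:Estimate_of_gradient_rPQ} to write $\|\nabla(r^{n+1}P(Q^n))\|^2 \le C + C(1+\|\Delta Q^n\|^2)(1+W_n+W_{n+1}+W_n^2\Delta t+W_{n+1}\Delta t)$, then use Lemma \ref{lem:Wbound} to replace each occurrence of $W_n$, $W_{n+1}$ by the uniform constant $C(W_0,T)$, collapsing the bracket to a constant (the $\Delta t$-weighted terms are harmless for $\Delta t\le 1$). This leaves $\|\nabla(r^{n+1}P(Q^n))\|^2 \le C(1+\|\Delta Q^n\|^2)$; multiplying by $\Delta t$ and summing from $n=0$ to $N$ yields $\sum_{n=0}^N \|\nabla(r^{n+1}P(Q^n))\|^2\,\Delta t \le C\,T + C\sum_{n=0}^N \|\Delta Q^n\|^2\,\Delta t$, and the latter sum is bounded by Lemma \ref{lem:Delta_Q_estimate} (handling the $n=0$ boundary term separately via $Q^0\in H^2$). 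Adding the two contributions gives the claimed uniform bound, and I would name the resulting constant $C$ as in the statement.

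The step I expect to require the most care is the bookkeeping in applying Lemma \ref{lem:Estimate_of_gradient_rPQ}: one must be sure that after substituting the $W_n$-bound the remaining $n$-dependence is \emph{only} through $\|\Delta Q^n\|^2$ (so that Lemma \ref{lem:Delta_Q_estimate} closes the estimate) and that no hidden factor of $1/\Delta t$ survives — in particular the terms $W_n^2\Delta t$ and $W_{n+1}\Delta t$ must be seen to stay bounded uniformly, which follows precisely because $W_n$ is already known to be uniformly bounded by Lemma \ref{lem:Wbound} and $\Delta t\le 1$. A minor technical point worth stating explicitly is that all the gradient manipulations are legitimate in $L^2$ thanks to the regularity $Q^n\in H^2$, $r^n\in H^1$ from Lemma \ref{lem:Solvability_scheme}, which is why the intermediate quantity $L\nabla\Delta Q^{n+1}$ is well-defined as an $L^2$ function even though no a priori $H^3$ bound is assumed.
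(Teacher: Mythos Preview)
Your proposal is correct and follows essentially the same route as the paper: write $L\nabla\Delta Q^{n+1}=\nabla H^{n+1}+\nabla(r^{n+1}P(Q^n))$, control the first piece via the dissipation sum in $W_{N+1}$ (Lemma~\ref{lem:Wbound}), and the second via Lemma~\ref{lem:Estimate_of_gradient_rPQ} combined with the uniform bound on $W_n$. The only minor redundancy is your appeal to Lemma~\ref{lem:Delta_Q_estimate} for $\sum_n\|\Delta Q^n\|^2\Delta t$: since $\frac{L}{2}\|\Delta Q^n\|^2\le W_n\le C(W_0,T)$ already gives a uniform \emph{pointwise} bound on $\|\Delta Q^n\|$, the paper simply absorbs the factor $(1+\|\Delta Q^n\|^2)$ into the constant and concludes $\|\nabla(r^{n+1}P(Q^n))\|^2\le C$ directly.
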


\subsection{Convergence to Strong Solution}

We construct linear interpolation numerical solution as 
\begin{equation}
    \label{eq:Qrsol}
    \Qsol(t) = \sum_{n=0}^{N-1}  \left[\alpha_{n+1}(t)\,\Qold+\alpha_n(t)\,\Qnew\right]\,\chi_{S_n},\quad\quad
    \rsol(t) = \sum_{n=0}^{N-1}  \left[\alpha_{n+1}(t)\,r^n+\alpha_n(t)\,r^{n+1} \right]\,\chi_{S_n},
\end{equation}
where $\alpha_n(t)=\frac{t-n\Delta t}{\Delta t}$,  $\alpha_{n+1}(t)=\frac{(n+1)\Delta t-t}{\Delta t}$, $S_n=[n\Delta t, (n+1)\Delta t)$ and $\chi_{S_n}$ is the characteristic function on $S_n$.

\begin{theorem}
    \label{thm:Strong_solution}
    Given initial value $Q_{in}\in H^2(\Omega)$, for fixed $T>0$, there exists a subsequence of numerical solution, denoted by $\{Q_{\Delta t_m}\}_m$ such that
    %a pair of strong solution 
%$(Q, r)$ satisfying system \eqref{eq:reformulated_system}. 
$Q_{\Delta t_m}\to Q$ in $L^2(0,T;H^2(\Omega))$ and $Q$ is a strong solution for equation \eqref{eq:Q_formula}. 
\end{theorem}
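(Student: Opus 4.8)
The plan is to assemble the uniform discrete bounds already proved — the energy estimate (Lemma \ref{lem:energy_stability}), the $H^2$ bound on $Q^n$ and the increment bound on $\nabla Q^n$ (Lemma \ref{lem:Wbound}), and the $H^3$-type bound (Lemma \ref{lem:Q_H3_estimate}) — translate them into $\Delta t$-uniform bounds for the interpolant $\Qsol$ in the right Bochner spaces, extract a weakly/strongly convergent subsequence by Aubin--Lions, and finally pass to the limit in the scheme \eqref{eq:DtQ}--\eqref{eq:Dtr} to identify the limit $Q$ as a strong solution. First I would record what the lemmas give for $\Qsol$: from Lemma \ref{lem:energy_stability} one has $\Qsol$ bounded in $L^\infty(0,T;H^1)$ and, since $\partial_t \Qsol = M H^{n+1}$ on each $S_n$, the bound $\sum \|H^{n+1}\|^2\Delta t \le 2E_0$ gives $\partial_t\Qsol$ bounded in $L^2(0,T;L^2)$; Lemma \ref{lem:Wbound} upgrades this to $\Qsol$ bounded in $L^\infty(0,T;H^2)$ and $\partial_t \Qsol$ bounded in $L^2(0,T;H^1)$ (the second sum in $W_n$ controls exactly $\|\nabla\partial_t\Qsol\|_{L^2(L^2)}$); Lemma \ref{lem:Q_H3_estimate} gives $\Qsol$ bounded in $L^2(0,T;H^3)$. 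One also needs the piecewise-constant interpolants $\bar Q_{\Delta t}(t)=\Qnew$ on $S_n$ (and $\underline{Q}_{\Delta t}=\Qold$) and the observation $\|\Qsol-\bar Q_{\Delta t}\|_{L^2(L^2)}^2 \le \Delta t^2 \sum\|\partial_t\Qsol\|^2\Delta t \to 0$, so all three interpolants share the same limit.

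Next I would apply the Aubin--Lions lemma with the triple $H^3 \hookrightarrow\hookrightarrow H^2 \hookrightarrow H^1$ (compact first embedding on the bounded $C^2$ domain $\Omega$): $\Qsol$ bounded in $L^2(0,T;H^3)$ with $\partial_t\Qsol$ bounded in $L^2(0,T;H^1)$ yields a subsequence $\Qsolsub \to Q$ strongly in $L^2(0,T;H^2)$, which is exactly the claimed convergence; along a further subsequence $\Qsolsub \rightharpoonup Q$ weakly in $L^2(0,T;H^3)$, $\partial_t\Qsolsub \rightharpoonup Q_t$ weakly in $L^2(0,T;H^1)$, and $\Qsolsub\overset{*}{\rightharpoonup}Q$ weak-$*$ in $L^\infty(0,T;H^2)$; hence $Q$ lies in the strong-solution class $L^2(0,T;H^3\cap H^1_0)\cap L^\infty(0,T;H^2)\cap H^1(0,T;H^1)$ stated in the paper, and the boundary condition is inherited from the $Q^n$. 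Then I would pass to the limit in the equation: rewrite the scheme as $\partial_t\Qsol = M(L\Delta \bar Q_{\Delta t} - \bar r_{\Delta t}\,P(\underline{Q}_{\Delta t}))$ in $L^2((0,T)\times\Omega)$. The terms $\partial_t\Qsolsub$ and $\Delta\bar Q_{\Delta t_m}$ converge weakly in $L^2(L^2)$ to $Q_t$ and $\Delta Q$; for the nonlinear term, strong $L^2(H^2)\hookrightarrow$ strong $L^2(L^\infty)$ (in $d\le 3$) convergence of $\underline{Q}_{\Delta t_m}$ combined with the Lipschitz continuity of $P$ gives $P(\underline{Q}_{\Delta t_m})\to P(Q)$ strongly, and $\bar r_{\Delta t_m}$ converges weakly in $L^2$; a product of weak-times-strong-$L^\infty$ convergence then identifies the limit, so $Q_t = M(L\Delta Q - r\,P(Q))$ a.e. Finally, invoking the $L^2$-equivalence $r = r(Q)$ established via the $V_n$, $D_n$ estimates (equations \eqref{eq:V_n}--\eqref{eq:D_n} and Lemma \ref{lem:Wbound} bounding $\sum\|Q^{k+1}-Q^k\|^2$, hence $V_n\to 0$), we get $r\,P(Q) = r(Q)P(Q) = S(Q)$, so $Q$ solves \eqref{eq:Q_formula} in $L^2((0,T)\times\Omega)$.

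The main obstacle is the nonlinear term $\bar r_{\Delta t}\,P(\underline Q_{\Delta t})$: one only has $L^2$ (not better) control of $r^n$, so identifying $\lim \bar r_{\Delta t_m}P(\underline Q_{\Delta t_m}) = r\,P(Q)$ requires strong convergence of the $Q$-factor in a space that pairs with weak-$L^2$ — strong $L^2(L^\infty)$ works in $d\le 3$ precisely because of the uniform $L^2(H^2)$ bound, and the $P$-Lipschitz estimate must be used carefully (e.g. $\|P(\underline Q_{\Delta t_m})-P(Q)\|_{L^2(L^\infty)}\le C\|\underline Q_{\Delta t_m}-Q\|_{L^2(L^\infty)}$ after controlling the $L^\infty$-in-space norm by $H^2$). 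A secondary technical point is that the time-increment terms appearing in Lemma \ref{lem:Estimate_of_gradient_rPQ} and in $W_n$ couple $W_{n+1}$ on the right-hand side, so the discrete Gronwall argument of Lemma \ref{lem:Wbound} must absorb the $W_{n+1}\Delta t$ term into the left — this is already handled there for small $\Delta t$, and here it only needs to be cited — but it means the whole chain of estimates is valid only for $\Delta t$ below an explicit threshold, which must be respected when choosing the subsequence $\Delta t_m\to 0$.
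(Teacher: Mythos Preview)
Your proposal is correct and follows essentially the same route as the paper: assemble the uniform bounds from Lemmas \ref{lem:energy_stability}, \ref{lem:Wbound}, \ref{lem:Q_H3_estimate}, apply Aubin--Lions with $H^3\hookrightarrow\hookrightarrow H^2\hookrightarrow H^1$ to get strong $L^2(0,T;H^2)$ convergence, pass to the limit in the scheme, and identify $r=r(Q)$ via the $V_n$ estimate \eqref{eq:V_n}. The only cosmetic difference is ordering: the paper first proves the strong convergence $\rsolsub\to r(Q)$ in $C([0,T];L^2)$ (via \eqref{eq:V_n} and Lemma \ref{lem:Wbound}, exactly as in \eqref{eq:r_limit}) and then passes to the limit, whereas you first pass to the limit with a weak-$L^2$ limit $r$ and then identify $r=r(Q)$ --- both are valid, and your weak-times-strong argument for the product $\bar r_{\Delta t}\,P(\underline Q_{\Delta t})$ is in fact more explicit than what the paper writes.
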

\begin{proof}
    We infer from Lemma \ref{lem:Wbound} and \ref{lem:Q_H3_estimate} that
    \begin{equation}
        \label{eq:Q_sol_bound}
        \Qsol\in L^\infty(0,T;H^2(\Omega))\bigcap L^2(0,T; H^3(\Omega)),\quad\quad        \partial_t \Qsol\in L^\infty(0,T; L^2(\Omega))\bigcap L^2(0,T;H^1(\Omega)).
    \end{equation}
    The Sobolev's embedding implies that $\Qsol\in L^\infty([0,T]\times \dom)$ and Lipschitz continuity of $P(Q)$ leads to $P(\Qsol)\in L^\infty([0,T]\times \dom)$. Applying Aubin-Lions lemma and Banach–Alaoglu theorem, we conclude that there exists a pair of subsequence $\{(\Qsolsub, \rsolsub)\}_m$ and a pair of functions $(Q,r)$ such that
\begin{equation}
\label{eq:existence_strong_solution}
    Q\in L^2\left(0,T;H^3(\Omega)\right)\bigcap L^\infty\left(0,T;H^2(\Omega)\right)\bigcap H^1\left(0,T;H^1(\Omega)\right),\quad\quad r\in L^\infty([0,T]; L^2(\Omega)),
\end{equation}
and
\begin{equation}
    \label{eq:passing_the_limit}
  \begin{aligned}
    &Q_{\Delta t_m}\rightharpoonup Q \quad\text{in}\,\,L^2\left(0,T;H^3(\Omega)\right)\bigcap H^1(0,T;H^1(\Omega)),\quad\quad Q_{\Delta t_m}\overset{\ast} {\rightharpoonup} Q\quad\text{in}\,\,L^\infty(0,T;H^2(\Omega))\\
    & Q_{\Delta t_m}\to Q\quad\text{in}\,\,L^2(0,T;H^2(\Omega))\bigcap C(0,T;H^1(\Omega)),\quad\quad \rsolsub\overset{\ast} {\rightharpoonup} r\quad\text{in }  L^\infty([0,T];L^2(\Omega))
  \end{aligned}
\end{equation}

To see that $r=r(Q)$, we introduce $r(Q)_{\Delta t}=\sum_{n=0}^{N-1}  \left[\frac{(n+1)\Delta t-t}{\Delta t}\,r(Q^n)+\frac{t-n\Delta t}{\Delta t}\,r(Q^{n+1}) \right]\,\chi_{S_n}$. Noting that as long as $Q$ is uniformly bounded, $r(Q)$ will be Lipschitz continuous where the Lipschitz constant is bounded by $C(W_0, T)$ stated in Lemma \ref{lem:Wbound}. Then for every $t\in[0,T]$, assume $t\in[(n-1)\Delta t, n\Delta t)$ for some $n\geq1$, we have
\begin{equation}
\label{eq:r_limit}
\begin{aligned}
\|\rsolsub-r(Q)\|^2&\leq 2\|\rsolsub-r(Q)_{\Delta t_m}\|^2+2\|r(Q)_{\Delta t_m}-r(Q)\|^2\\
&\leq C\max\limits_{1\leq n\leq N}\left(\|r^{n-1}-r(Q^{n-1})\|^2+\|r^n-r(Q^n)\|^2\right)+C\|Q-\Qsolsub\|^2\\
&\leq C\left\|\sum_{k=0}^{N-1}\lvert Q^{k+1}-Q^k\rvert^2\right\|^2+C\|Q-\Qsolsub\|^2\\
&\leq C\Delta t_m\,\left(\sum_{k=0}^{N-1}\left\|\frac{\nabla Q^{k+1}-\nabla Q^k}{\Delta t_m}\right\|^2\,\Delta t_m\right)^2+C\|Q-\Qsolsub\|^2\to0,
\end{aligned}
\end{equation}
as $m\to \infty$ where we have used Lemma \ref{lem:Wbound}. So we have shown that $r=r(Q)$ and $\rsolsub\to r(Q)$ in $C([0,T];L^2(\Omega))$. By passing the limit to infinity, we can see that $Q_t=M(L\Delta Q-rP(Q))$ and with $rP(Q)=r(Q)P(Q)=S(Q)$, we obtain that $Q$ is a strong solution for equation \eqref{eq:Q_formula}.

\end{proof}

\bibliographystyle{abbrv}
\bibliography{relatedliterature}

\end{document}